\documentclass[11pt]{article}

\usepackage{t1enc}
\usepackage{amsthm,amssymb}
\usepackage{amsmath, eucal}
\usepackage{latexsym}
\usepackage{amsmath} 
\usepackage{graphics}
\usepackage{epsfig,multicol}
\usepackage{amsfonts}
\usepackage[latin1]{inputenc}
\usepackage[english]{babel}
\usepackage{relsize} 

%


\usepackage{relsize} 
\paperwidth=200mm
\textwidth=140mm
\textheight=200mm
\topmargin=20mm 
\hoffset=-10mm
\textheight=200mm

\begin{document}

\newtheorem{theorem}{Theorem}[section]
\newtheorem{corollary}[theorem]{Corollary}
\newtheorem{lemma}[theorem]{Lemma}
\newtheorem{proposition}[theorem]{Proposition}
\newtheorem{conjecture}[theorem]{Conjecture}

\newtheorem*{coro}{Corollary} 

\theoremstyle{definition}
\newtheorem{definition}[theorem]{Definition}
\newtheorem{example}[theorem]{Example}
\newtheorem{remark}[theorem]{Remark}

\def\hpic #1 #2 {\mbox{$\begin{array}[c]{l} \epsfig{file=#1,height=#2}
\end{array}$}}
 
\def\vpic #1 #2 {\mbox{$\begin{array}[c]{l} \epsfig{file=#1,width=#2}
\end{array}$}}

\newcommand{\OF}{$\vec F $ }
\newcommand{\HO}{HOMFLYPT }
\newcommand{\FO}{$ \vec {\mathcal F} $ }
\newcommand{\rotovercrossing}{\mathbin{\rotatebox[origin=c]{-90}{$\overcrossing$}}}
\newcommand{\rotundercrossing}{\mathbin{\rotatebox[origin=c]{-90}{$\undercrossing$}}}

\title{The Homflypt polynomial and the oriented Thompson group.}
\author{Valeriano Aiello$^\dag$, Roberto Conti$^{\flat}$, Vaughan F. R. Jones$^\sharp$\thanks{V.J. is supported by the NSF under Grant No. DMS-0301173 and grant DP140100732, Symmetries
of subfactors.}\\
\\
$^\dag$ Dipartimento di Matematica e Fisica\\ Universit\`a Roma Tre \\ 
Largo S. Leonardo Murialdo 1, 00146 Roma, Italy.\\
e-mail: valerianoaiello@gmail.com\\
\\
$^\flat$ Dipartimento di Scienze di Base e Applicate per l'Ingegneria \\ 
Sapienza Universit\`a di Roma \\ Via A. Scarpa 16, 00161 Roma, Italy.\\
e-mail: roberto.conti@sbai.uniroma1.it\\
\\
$^\sharp$ Department of Mathematics, \\
Vanderbilt University, \\
1362 Stevenson Center, Nashville, TN 37240, USA\\
e-mail: vaughan.f.jones@vanderbilt.edu
}
\maketitle
\begin{abstract}
 We show how to construct  unitary representations of the oriented Thompson group $\vec F $ from 
 oriented link invariants. In particular we show that the suitably normalised HOMFLYPT polynomial
 defines a positive definite function of \OF. 
 \end{abstract}

\section{Introduction}
The Thompson group $F$ is usually defined geometrically as a group of piecewise linear homeomorphisms of the circle. One can then
readily give a combinatorial description in terms of pairs of planar rooted binary trees, see \cite{CFP}. In \cite{jo1} the geometrical
definition of $F$ was used to obtain some knot and  link TQFT invariants 
as coefficients of a specific ``vacuum'' vector $\Omega$, for certain unitary representations of $F$. The \emph{calculation} of these coefficients used a direct construction of a link
from the combinatorial description of an element of $F$ as a pair of binary trees, 
by replacing each vertex of the trees by a crossing, where the
roots at the top and bottom are vertices, and the loose ends introduced are all tied
togther in the only planar way possible. 

If $g\in F$ let us call $\mathcal L (g)$ 
the corresponding link, $L\mapsto Q(L)$  the relevant TQFT link invariant  ($L$ denotes a generic link) and  $\pi$ the unitary representation of $F$ defined in \cite{jo1}.
The picture below illustrates how to construct $\mathcal L(g)$ from a pair of trees:

$$
\includegraphics[scale=0.4]{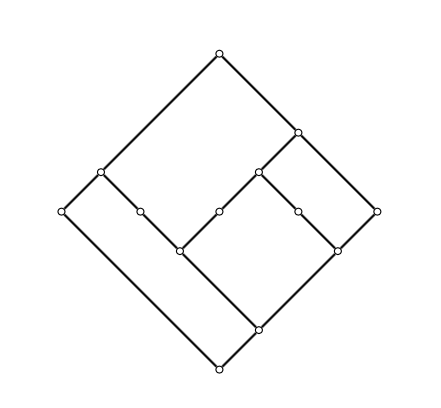}\qquad \quad
\includegraphics[scale=0.4]{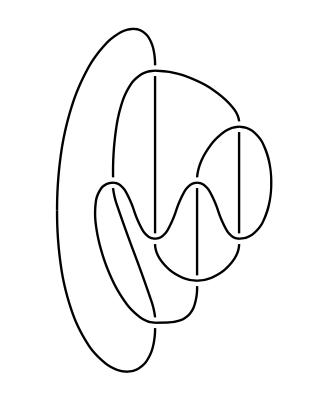}
$$
where each vertex is replaced by the following tangle as shown below:
$$
\includegraphics[scale=0.4]{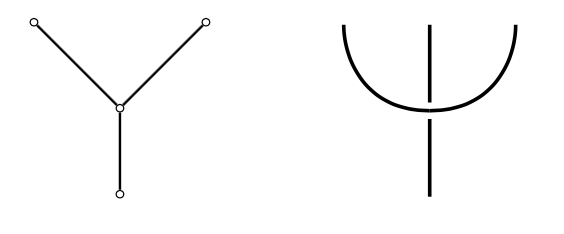}
$$

and the free ends are connected in the obvious way.

Then $$\langle \pi(g)\Omega,\Omega\rangle = Q(\mathcal L(g)).$$
 
Since $\pi$ is unitary, the map $g\mapsto Q(\mathcal L(g))$ is a positive definite function on $F$, which is not
obvious without this interpretation.

In \cite{jo1} it was shown that \emph{any} link arises as $\mathcal L(g)$ for some $g\in F$ so that $F$ is as good as the braid groups
at producing links. Unfortunately $\mathcal L(g)$ receives no natural orientation from $g$. This defect was overcome in \cite{jo1}
by restricting the construction  to a subgroup $\vec F<F$ for which the chequerboard shading surface for  $\mathcal L(g)$ is oriented
and so defines an orientation of $\mathcal L(g)$ as the oriented boundary. It was further shown that all \emph{oriented} links
arise in this way from $\vec F$.

But \cite{jo1} failed to interpret \emph{oriented} TQFT invariants as coefficients of unitary representations of $\vec F$.

In this paper we fill this gap, following the observation by the first two authors (extending the result in  \cite{AC2}) that, at least for HOMFLYPT, the map
$g\mapsto Q(\mathcal L(g))$ is in fact positive definite on $\vec F$. For a precise statement, see Theorem \ref{HOMFLYPT} below.

We will achieve this interpretation using the method of \cite{jo2} which is a remarkably flexible way of constructing actions of
many groups by realising them as \emph{groups of fractions} $G(\mathfrak C)$ (in the sense of Ore) for categories $\mathfrak C$ with stabilisation and
cancellation properties: every functor from $\mathfrak C$ to another category $D$ admitting direct limits yields immediately an action of
$G(\mathfrak C)$ on a certain direct limit. 

In the case of Thompson's group $F$ we have $F=G(\mathfrak C)$ where $\mathfrak C$ is the category $\mathfrak F$ of binary planar rooted forests.
There is an abundance of readily obtainable functors from $\mathfrak F$ to other categories, in particular one may realise the construction
of $\mathcal L(g)$ by a functor to the category of Conway tangles. In this paper we will show how to realise $\vec F$ 
as the field of fractions of an oriented version $\vec{ \mathfrak F}$ of $\mathfrak F$ (see Proposition \ref{OF}). And \emph{oriented} TQFT invariants $Q$ will give rise to functors
from  $\vec {\mathfrak F}$ which give unitary representations $\pi$ of $\vec F$ such that $$Q(\mathcal L(g))=\langle \pi(g)\Omega,\Omega\rangle$$
for $g\in \vec F$. 
Positive definiteness of the map $g\mapsto Q(\mathcal L(g))$ becomes immediate.

\section{Positivity of the HOMFLYPT inner product.}

Let $\sigma$ be a function from $\{1,2,\cdots , 2n\}$ to $\{+,-\}$. such that $|\sigma^{-1}(+)| =|\sigma^{-1}(-)|=n$. Let $s\in \mathbb C$ and $k \in \mathbb N$ be given. We consider the vector
spaces $V_\sigma$ given by the HOMFLYPT skein module for the circle with $2n$ boundary points. That is to say the quotient of the space of all linear 
combinations of
Conway tangles of oriented links  (orientations compatible at the boundary with $\sigma$ with $+=$ out and $- =$ in) modulo the skein
relation \\
$$
\includegraphics[scale=0.25]{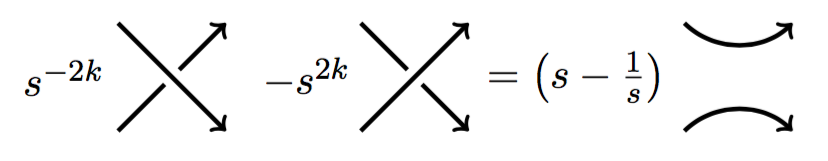}
$$

These vector spaces have a natural *-structure given by reflection, reversing all the orientations and the condition 
$$
\includegraphics[scale=0.25]{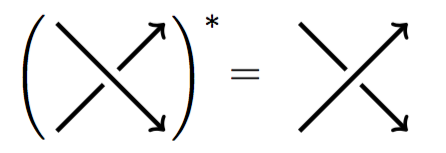}
$$

One needs to specify a first point on the boundary of a diagram
as well. See \cite{jo3} for more details. The HOMFLYPT polynomial equips the $V_\sigma$ with scalar products $\langle \;, \;\rangle$ where if $T_1$ and $T_2$
are tangles, $\langle T_1,T_2\rangle$ is the \HO polynomial of the link: 
$$
\includegraphics[scale=0.35]{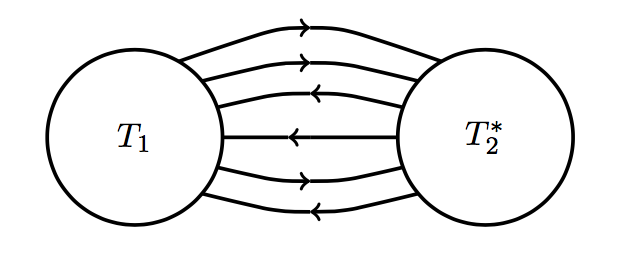}
$$

\begin{proposition} \label{prop-inner-prod}
The inner product $\langle \; ,\; \rangle$ on $V_\sigma$ defined above is positive semidefinite if $s=e^{\pi i/r}$ for $r\geq k+2, r\in \mathbb Z$.
\end{proposition}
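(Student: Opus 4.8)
The plan is to prove positive semidefiniteness of the HOMFLYPT inner product by connecting the skein module $V_\sigma$ to the representation theory of the Hecke algebra, where positivity is governed by the Markov trace and its associated weights. My strategy rests on the observation that $V_\sigma$, as a HOMFLYPT skein module of the disc with $2n$ marked boundary points (with orientation pattern $\sigma$), carries an action closely related to the Hecke algebra $H_n(s)$ of type $A$, and that the inner product $\langle T_1, T_2 \rangle$ obtained by closing up $T_1$ against the reflection-reversal of $T_2$ is precisely the bilinear form built from the Markov trace on this algebra. Positive semidefiniteness of such trace forms is classical when the parameter $s$ is specialized to the roots of unity appearing in the statement.

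\medskip
First I would set up the algebraic framework: identify a spanning set of $V_\sigma$ by tangles, and translate the given skein relation and the circle-removal relation (the condition in figure $\mathrm{figQ}$) into the defining relations and loop value of the relevant Hecke-type algebra with parameter $s$ and the auxiliary integer $k$ controlling the HOMFLYPT variable. The reflection-reversal $*$-structure on $V_\sigma$ should be matched with the natural conjugate-linear involution on the algebra, and I would verify that $\langle T_1, T_2\rangle$ equals a Markov-trace evaluation $\mathrm{tr}(T_2^* T_1)$ (up to the bookkeeping of the basepoint on the boundary). Once this dictionary is in place, positivity of $\langle\,,\,\rangle$ is equivalent to positivity of the trace form $(a,b)\mapsto \mathrm{tr}(b^* a)$ on the algebra.

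\medskip
Next I would invoke the structure theory at the special value $s = e^{\pi i / r}$. At a generic parameter the Hecke algebra is semisimple and its Markov trace decomposes as a weighted sum of irreducible characters, with weights given by products over boxes of the associated Young diagrams. When $s$ is the root of unity $e^{\pi i/r}$ with $r \geq k+2$, the semisimple quotient is the one governed by Young diagrams whose shapes are constrained (hook lengths or row/column counts bounded in terms of $k$ and $r$), and the corresponding trace weights become nonnegative precisely in this range — this is the familiar positivity phenomenon underlying the construction of unitary TQFTs and subfactor representations. Writing the trace form as $\sum_\lambda w_\lambda \langle \cdot, \cdot\rangle_\lambda$, where each $\langle\,,\,\rangle_\lambda$ is the genuinely positive definite form on the irreducible block $\lambda$ and $w_\lambda \geq 0$ is the trace weight, yields positive \emph{semi}definiteness (the kernel being spanned by the blocks with vanishing weight, i.e.\ the diagrams ruled out by the constraint $r \geq k+2$).

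\medskip
The main obstacle I anticipate is the precise determination of the sign of the trace weights $w_\lambda$ at the boundary root of unity, and the careful verification that the cutoff $r \geq k+2$ is exactly what forces all the surviving weights to be nonnegative while killing the rest. This requires evaluating the HOMFLYPT-normalized weight for each admissible Young diagram as a product of quantum-integer factors and checking that none of these factors changes sign in the stated range — the inequality $r \geq k+2$ presumably guarantees that every relevant quantum integer $[m]_s$ with $m$ up to the allowed diagram size stays strictly positive, whereas at $r = k+1$ a vanishing or negative factor would appear. Handling the basepoint convention and ensuring the reflection $*$-operation matches the algebra involution (so that $\langle T, T\rangle = \mathrm{tr}(T^* T) \geq 0$ with the correct conjugation) is a secondary technical point that must be dispatched carefully but should not present a genuine difficulty once the identification with the trace form is secured.
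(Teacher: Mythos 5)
Your proposal correctly identifies the engine of the proof: for the orientation pattern $\sigma = (+,+,\dots,+,-,-,\dots,-)$ the skein module is spanned by braids (Morton--Traczyk), the inner product is the Markov trace form on the Hecke algebra, and positivity of the trace weights at $s = e^{\pi i/r}$, $r \geq k+2$, is exactly Wenzl's theorem (which the paper simply cites, with the parameter dictionary $q = s^{-2}$, $r = s^{-2k}$, rather than re-deriving the weight formulas as you propose --- legitimate either way, though your ``main obstacle'' paragraph is redoing a hard published theorem). However, there is a genuine gap: your identification of $\langle T_1, T_2\rangle$ with $\mathrm{tr}(T_2^* T_1)$ on a Hecke algebra only makes sense for that one special $\sigma$. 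The proposition is stated for an \emph{arbitrary} pattern $\sigma$ of $n$ pluses and $n$ minuses, and for a mixed pattern such as $(+,-,+,-,\dots)$ the tangles in $V_\sigma$ cannot be straightened into braids: strands enter and exit in interleaved directions, diagrams contain cups and caps, and the resulting skein module is not the Hecke algebra in any obvious way (it is related instead to mixed tangle algebras / tensor products of $V$ and $V^*$). Your opening sentence, ``identify a spanning set of $V_\sigma$ by tangles, and translate the given skein relation \dots into the defining relations \dots of the relevant Hecke-type algebra,'' silently assumes away exactly this difficulty.

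The paper closes this gap with a short but essential reduction: since $S_{2n}$ acts transitively on the set of patterns $\sigma$, it suffices to transport positivity across an adjacent transposition $\sigma \to \sigma'$, and this is done by the linear map $V_\sigma \to V_{\sigma'}$ given by surrounding the tangle with a single crossing at the two transposed boundary points; a Reidemeister~II move shows this map preserves the inner product, so positive semidefiniteness propagates from Wenzl's case to every $\sigma$. Without this step (or some substitute, e.g.\ a direct unitarity argument for mixed tensor representations of the quantum group at roots of unity), your argument proves the proposition only for the single unmixed pattern, which is not enough for the rest of the paper: the HOMFLYPT functor is applied to skein spaces whose boundary orientations are genuinely mixed, being dictated by the $n$-signs $f(\sigma)$ arising from the forests.
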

\begin{proof} This is a corollary of Wenzl's paper \cite{wn} 
in which the result is proved for the special choice of $\sigma$ with $\sigma=(+,+,+.\cdots,+,-,-,-,\cdots,-)$. 
In this case, it is well known that the the corresponding tangles are linearly spanned by braids (see e.g. \cite{MoTr}). 
The connection between the 
HOMFLYPT polynomial and Wenzl's result in \cite{wn}, which is formulated in terms of Hecke algebras,
is explained quite in detail in \cite[Section 3]{wn2}. We only observe some change in conventions and notations: consider the change of parameter $(r,s) \mapsto (r,-s^{-1})$ in \cite{wn2}
and then Theorem 3.1.1 therein applies with $r = s^{-2k}$ (the Hecke algebra parameter becomes $q=s^{-2}$).

To get from this choice of boundary orientations to any other one, first observe that the permutation group $S_{2n}$ is transitive on all the $\sigma$. So it suffices
to show 
that positivity is conserved if the boundary orientation $\sigma$ is changed to $\sigma'$ by applying a transposition between adjacent elements.  For this simply
surround the circle in which the tangles live with a single crossing 
as: \\
$$
\includegraphics[scale=0.25]{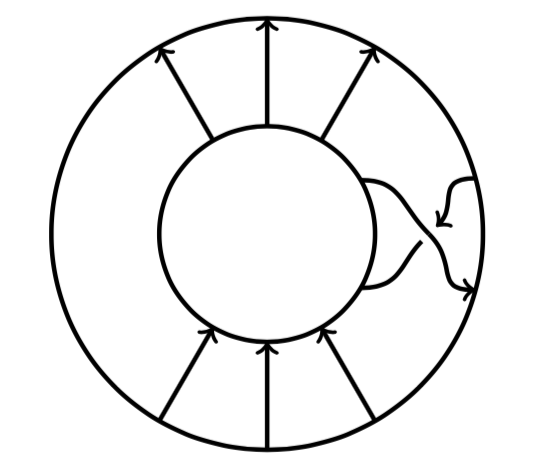}
$$
This  defines a $\langle \; ,\; \rangle$-preserving linear map from $V_\sigma$ to $V_{\sigma'}$.
The invariance of the inner product is a consequence of the application of 
a Reidemeister move of type II, as shown in the figure below for a concrete example. 
Consider the tangles $T_1, T_2 \in V_{(+++---)}$ given by
$$
\includegraphics[scale=0.35]{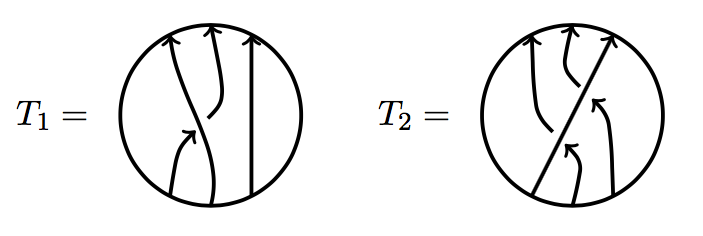}
$$
then the scalar product $\langle T_1,T_2\rangle$ is the HOMFLYPT polynomial of the link
$$
\includegraphics[scale=0.5]{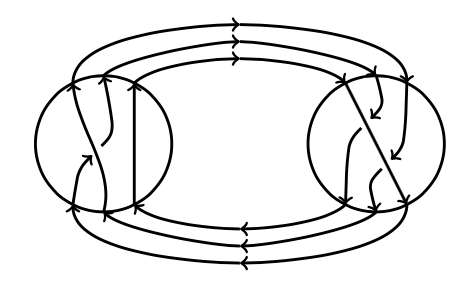}
$$
while the scalar product of the images of $T_1, T_2$ 
in $V_{(++-+--)}$ is the HOMFLYPT polynomial of the link
$$
\includegraphics[scale=0.35]{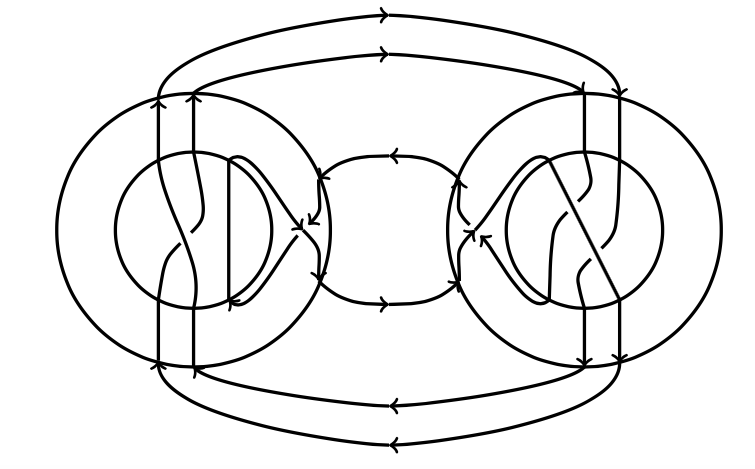}
$$
The invariance of the inner product in the other cases follows from similar arguments. 
\end{proof}
\begin{remark} \label{real} If one changes the *-structure by making an oriented crossing self-adjoint, the inner product becomes positive
semidefinite when $s\in \mathbb R^+$.
\end{remark}
\section{ The category $\vec{\mathfrak F}$ and its group of fractions. }
Recall from \cite[Section 5.3.2]{jo1} that a pair of binary planar trees representing an element of $F$ determines a surface 
with boundary as below: 
$$
\includegraphics[scale=0.25]{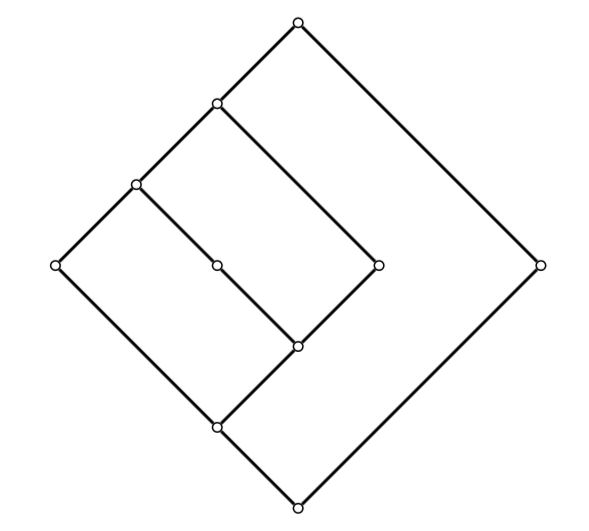}
$$

$$
\includegraphics[scale=0.25]{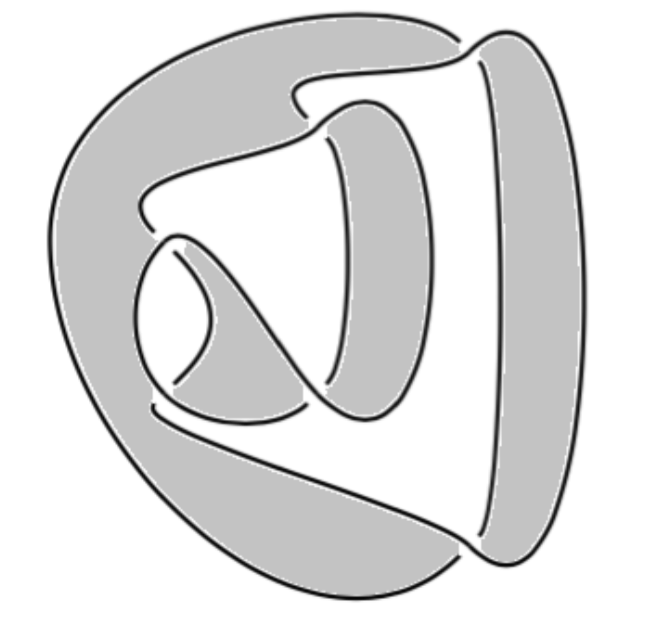}
$$

   $\vec F$ is the subgroup of $F$ for which this  surface is orientable and one chooses an orientation $+$ for the
leftmost part of the surface in the picture. The link which is the boundary of the surface is then oriented with the boundary orientation. It is shown
in \cite{jo1} that, modulo distant unknots, every oriented link arises in this way from an element of \OF.

In \cite[Section 2.1]{jo2} a group of fractions was associated to an object $|$ in a category ${\it Cat}$ having three properties, namely
\begin{enumerate}
\item (Unit) ${\it Mor}_{\it Cat}(|,a)\neq \emptyset$ for all $a\in {\it Ob}({\it Cat})$.
\item (Stabilisation) Let $\mathcal{D}:=\cup_{a\in {\it Ob}({\it Cat})}{\it Mor}_{\it Cat}(|,a)$. Then for each $f$, $g\in \mathcal{D}$ there are morphisms $p$ and $q$ such that $pf=qg$.
\item (Cancellation) Given two morphisms $p$ and $q$, if $pf=qf$ for $f\in \mathcal{D}$ then $p=q$. 
\end{enumerate} 
 If the category has one object the group of fractions is due to Ore. More importantly an
action of the group of fractions is associated to any functor $\Phi$ from ${\it Cat}$ to another category ${\it Kat}$. In the case where ${\it Cat}$ is the category of planar binary 
forests, $|$ is the forest with one root and one leaf, the group of fractions is Thompson's group $F$ and morphisms are obtained whenever ${\it Kat}$  has a morphism
which behaves like a tree with one root and two leaves. In particular in the category of  unoriented tangles a crossing provides such a morphism. Applying a unitary
unoriented TQFT to the picture gives unitary representations of $F$ on a Hilbert space with a special ``vacuum'' vector $\Omega$ so that the coefficient $\langle g\Omega, \Omega\rangle$ 
is the invariant  that the TQFT assigns to the link. We will now repeat this construction by constructing a category $\vec{\mathfrak F}$ of planar binary forests with extra orientation data. 
$\vec{\mathfrak F}$ will satisfy the axioms of \cite{jo2} and its group of fractions will be obviously isomorphic to \OF. Moreover assigning \emph{oriented} crossings to the forks in $\vec{\mathfrak F}$ will give a
functor to the category of tangles and applying a unitary oriented TQFT such as SU(N) at some level will give a unitary representation of \OF whose coefficients $\langle g\Omega, \Omega\rangle$  produce the
HOMFLYPT polynomial of the oriented link constructed from $g$ in \cite{jo1}.

\begin{definition} For $n=1,2,3,\cdots$ an \emph{$n$-sign} will be a sequence of $n$ $+$'s and $-$'s 
such that the first sign is $+$ and the second is $-$ (if $n\geq 2)$. \end{definition}
An $n$-sign determines a bi-colouring (by $\pm$) of the regions to the left of  $n$ points on $\mathbb R$ so that  the infinite left region is coloured $+$ (adjacent 
regions other than the leftmost two may have the same colour).

Binary planar forests are as in \cite{jo1}. They form a category under stacking.
\begin{proposition}
Given an $m$-sign $\sigma$ and a binary planar forest $f$ whose $m$ roots lie on $\mathbb R$ and whose $n$ leaves lie on
a parallel copy of $\mathbb R$, the colouring of the regions between the roots of $f$ extends uniquely to a colouring of the regions of the
planar map given by the forest, with the property that regions sharing an edge of $f$   emanating clockwise from the lower incident edge at a triple point
have
different colours. 
Thus $f$ and $\sigma$ together determine an $n$-sign $f(\sigma)$ which can be read off from the copy of $\mathbb R$ containing the leaves of $f$.
\end{proposition}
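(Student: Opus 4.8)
The plan is to establish existence and uniqueness of the extension by induction on the number $v$ of triple points of $f$, after first isolating the local effect of a single caret. Drawing $f$ with its roots on the lower copy of $\mathbb{R}$ and its leaves on the upper one, I would look at one triple point: it has a single \emph{lower} incident edge (the stem, pointing down towards the root) and two upper edges going up-left and up-right to the two children. Locally there are three regions---one to the left ($L$), one to the right ($R$), and one wedged between the two upper edges ($T$)---and $T$ is the only region created by this caret, in the sense that it disappears once the caret is erased. Rotating clockwise from the stem, the first edge encountered is the upper-left edge, which bounds $L$ and $T$; this is the unique edge to which the stated rule applies at this vertex. Consequently the rule imposes there exactly one equation, $\mathrm{colour}(T)=-\mathrm{colour}(L)$, and leaves the stem and the upper-right edge unconstrained. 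Read off along the leaves, this is precisely the duplication $x\mapsto(x,-x)$ of the sign sitting at the careted position.

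For the base case $v=0$ the forest is a family of straight strands, $m=n$, and $\sigma$ extends uniquely by carrying each region vertically upward, giving $f(\sigma)=\sigma$. For the inductive step I would pick a caret $c$ both of whose leaves lie on the upper line---such a ``topmost'' caret exists because any vertex with no descendant vertex has two leaf-children---and delete it, merging its two leaves into one leaf to form a forest $f'$ with $v-1$ vertices. Any colouring of $f$ restricts to a colouring of $f'$ by forgetting the region $T$ of $c$, and the rule continues to hold at every vertex of $f'$, since these are exactly the vertices of $f$ other than $c$ and their constraints are untouched. By the inductive hypothesis $\sigma$ extends uniquely to $f'$, producing the $(n-1)$-sign $f'(\sigma)$; re-attaching $c$ at its position $i$ in the leaf line of $f'$ creates only the region $T$, whose colour is then forced by the single local equation to equal $-f'(\sigma)_i$. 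Hence the extension to $f$ exists and is unique, and $f(\sigma)$ is obtained from $f'(\sigma)$ by the replacement $f'(\sigma)_i\mapsto(f'(\sigma)_i,-f'(\sigma)_i)$.

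Finally I would verify that $f(\sigma)$ is an honest $n$-sign, i.e.\ that it begins with $+$ and (for $n\geq2$) has $-$ in second place. This follows by checking that the single-caret duplication preserves the property and invoking it along the induction: the leftmost region is always the infinite left region, coloured $+$, so the first entry stays $+$; and splitting at position $i=1$ gives second entry $-f'(\sigma)_1=-$, splitting at $i=2$ gives second entry $f'(\sigma)_2=-$, and splitting at $i\geq3$ leaves the first two entries of $f'(\sigma)$ intact. The step I expect to require the most care is the correct reading of the colouring rule: one must confirm that at each triple point it constrains exactly one of the three incident edges, so that the newly created region $T$ has its colour forced (yielding uniqueness) while no competing constraint can ever arise (yielding consistency, and hence existence).
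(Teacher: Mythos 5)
Your proof is correct, and it supplies a genuine argument where the paper gives essentially none: the paper's entire proof is the sentence ``We give a proof by example,'' followed by a single coloured figure (the case $\sigma=(+,-,+,-)$, $f(\sigma)=(+,-,-,+,-,+,-,-,+,+)$). What your write-up makes explicit is exactly what that figure is meant to suggest: at each triple point the stated rule constrains precisely one of the three incident edges --- rotating clockwise from the downward stem one meets the up-left edge first --- so the only equation is $\mathrm{colour}(T)=-\mathrm{colour}(L)$ for the new wedge region $T$, the colouring therefore propagates uniquely caret by caret from the root line to the leaf line, and on the leaf line each caret acts by the duplication $x\mapsto(x,-x)$. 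Your induction via a topmost caret (whose deletion removes only $T$, a region whose boundary meets no vertex other than the caret's own, so every other vertex's constraint is untouched) turns this picture into a complete existence-and-uniqueness proof, and it buys two things the paper leaves implicit: that no conflicting constraints can ever arise (existence is not automatic from uniqueness), and that $f(\sigma)$ is an honest $n$-sign beginning $(+,-)$, which is needed for $(f,\sigma)$ to define a morphism of $\vec{\mathfrak F}$ but is never checked in the paper. The crux, as you yourself flag, is the reading of ``emanating clockwise from the lower incident edge''; your choice (the up-left edge, hence $T=-L$) is the geometrically correct one and indeed the only workable one: with the opposite reading the wedge colour would depend on the region to its right, which for a rightmost caret is the infinite right region, a region whose colour an $n$-sign does not even record.
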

 We give a proof by example.
 Consider $\sigma$ and $f(\sigma)$ equal to $(+,-,+,-)$ and  $(+,-,-,+,-,+,-,-,+,+)$ respectively, then
 $$
\includegraphics[scale=0.25]{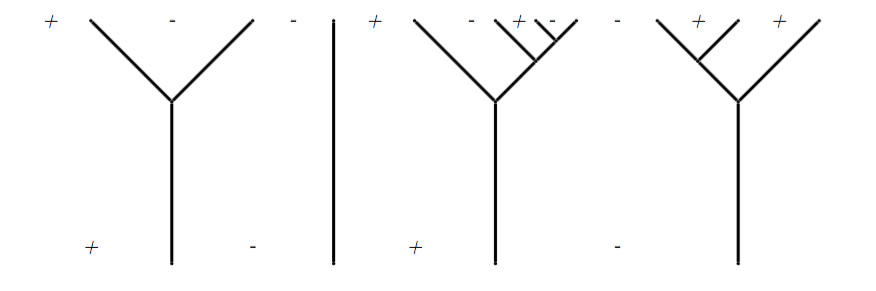}
$$

 \begin{proposition}
 The map $\sigma\rightarrow f(\sigma)$ is functorial, i.e. $f(g(\sigma))=(f\circ g)(\sigma)$.
 \end{proposition}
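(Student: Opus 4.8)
The plan is to reduce the statement to the uniqueness of the colouring established in the preceding proposition, combined with the observation that the defining condition is \emph{local} at each triple point and so propagates from the roots outward. First I would fix the geometry of the composition. Write the roots of $g$ on a line $\mathbb{R}_0$ and its leaves on a parallel line $\mathbb{R}_1$; the forest $f$ has its roots on $\mathbb{R}_1$ and its leaves on a third parallel line $\mathbb{R}_2$. Stacking along $\mathbb{R}_1$ identifies the leaves of $g$ with the roots of $f$ and produces $f\circ g$, whose set of regions is the disjoint union of the regions of $g$ (on the $\mathbb{R}_0$-side of $\mathbb{R}_1$) and of $f$ (on the $\mathbb{R}_2$-side), and whose triple points are exactly the triple points of $g$ together with those of $f$. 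In particular the local colouring condition at any triple point of $f\circ g$ coincides with the corresponding condition inside whichever of $g$ or $f$ contains it.

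Next I would let $c$ be the unique colouring of $f\circ g$ determined by $\sigma$, and restrict it to the slab occupied by $g$. This restriction agrees with $\sigma$ along $\mathbb{R}_0$, has its leftmost infinite region coloured $+$, and satisfies the local rule at every triple point of $g$; hence it \emph{is} a valid colouring of $g$ with root-sign $\sigma$. By the uniqueness part of the preceding proposition, $c|_g$ is the colouring of $g$ determined by $\sigma$, and so the sign that $c$ induces along $\mathbb{R}_1$ is precisely $g(\sigma)$.

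I would then repeat the argument one slab higher. Restricting $c$ to the region occupied by $f$, its root-sign read along $\mathbb{R}_1$ is $g(\sigma)$ by the previous step, its leftmost region is $+$, and it obeys the local rule at every triple point of $f$. Uniqueness applied to $f$ now forces $c|_f$ to be the colouring of $f$ determined by $g(\sigma)$, so the sign $c$ induces along $\mathbb{R}_2$ equals $f(g(\sigma))$. Since $(f\circ g)(\sigma)$ is by definition nothing but the sign that $c$ reads off on $\mathbb{R}_2$, we conclude $(f\circ g)(\sigma)=f(g(\sigma))$.

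The only delicate point, and the main (if modest) obstacle, is to justify cleanly that the colouring really propagates from the roots and does not receive any feedback from the upper slab, so that $c|_g$ may legitimately be treated as the colouring of $g$ in isolation. Concretely, one must check that the colour of each region of $g$ is forced by $\sigma$ and the local rule using only the data below $\mathbb{R}_1$; this is immediate from the inductive character of the uniqueness proof, where regions are coloured in order of increasing distance from $\mathbb{R}_0$ and each new colour is determined by an already-coloured neighbour across a single edge. Granting this locality, the matching of the two slabs along $\mathbb{R}_1$ and the two invocations of uniqueness give the result; I would also note in passing that $g(\sigma)$ is automatically a legitimate sign and hence an admissible input for $f$, as already guaranteed by the preceding proposition.
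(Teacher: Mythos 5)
Your proof is correct and takes essentially the same approach as the paper: both arguments reduce the claim to the uniqueness of the colouring established in the preceding proposition, using the decomposition of $f\circ g$ into the slab of $g$ and the slab of $f$. The only (immaterial) difference is direction --- the paper glues the colouring of $g$ induced by $\sigma$ to the colouring of $f$ induced by $g(\sigma)$ and invokes uniqueness once for $f\circ g$, whereas you restrict the colouring of $f\circ g$ to the two slabs and invoke uniqueness once for $g$ and once for $f$.
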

 \begin{proof} 
The claim is actually a consequence of the uniqueness of the procedure described in the former proposition. In fact, on the one hand we have a colouring of the regions determined by $g$ and induced by 
 $\sigma$, which in turn extends to a colouring  of the regions determined by $f$ and induced by 
  $g(\sigma)$. On the other hand, since $f\circ g$ is forest we also have a naturally induced colouring of the regions determined by $f\circ g$ and 
  $\sigma$. The claim now follows by the uniqueness of the procedure.
 \end{proof}
 \begin{definition} The category $\vec{\mathfrak F}$ has as objects all pairs of the form $(n,\sigma)$ where $\sigma$ is an $n$-sign and the morphisms from
 $(m,\sigma_1)$ to $(n,\sigma_2)$ are the pairs $(f,\sigma_1)$ where $f(\sigma_1)=\sigma_2$. Composition of morphisms is obvious.
 \end{definition}
 \begin{proposition}
Consider $|=(1,\sigma_0)$, with $\sigma_0$ being the unique $1$-sign. Then $\vec{\mathfrak F}$ satisfies the Unit, Stabilisation and Cancellation properties.  \end{proposition}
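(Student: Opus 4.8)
The plan is to reduce the three axioms to the corresponding (known) properties of the underlying category $\mathfrak F$ of unoriented binary planar forests via the forgetful assignment $(n,\sigma)\mapsto n$, $(f,\sigma)\mapsto f$, the only genuinely new input being the combinatorics of signs, which is controlled by the functoriality $f(g(\sigma))=(f\circ g)(\sigma)$ established above. Throughout I use that a single fork grown at the $i$-th leaf acts on an $n$-sign by inserting, immediately after position $i$, the sign opposite to $\sigma_i$; this is precisely the colouring rule of the preceding propositions, and in particular it sends the unique $1$-sign $(+)$ to the unique $2$-sign $(+,-)$. I also record the companion fact that any \emph{tree} $T$ applied to $\sigma_0$ produces a valid sign, since the infinite left region stays $+$ and the region to the right of the leftmost leaf is forced to be $-$.

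For the Unit property I must show that $\mathrm{Mor}_{\vec{\mathfrak F}}(|,(n,\sigma))\neq\emptyset$ for every object, i.e. that every $n$-sign $\sigma$ equals $T(\sigma_0)$ for some binary tree $T$. I would prove this by induction on $n$. The base cases $n=1$ (trivial tree) and $n=2$ (a single fork) are immediate. For the inductive step, given a valid $n$-sign $\tau=(\tau_1,\dots,\tau_n)$ with $n\geq 3$, I distinguish two cases. If there is an index $p$ with $3\leq p\leq n$ and $\tau_p\neq\tau_{p-1}$, then deleting the $p$-th entry yields a valid $(n-1)$-sign $\tau'$ (the first two entries $+,-$ are untouched since $p\geq 3$); by induction $\tau'=T'(\sigma_0)$, and since $\tau_p$ is the sign opposite to $\tau_{p-1}$, growing a fork at the $(p-1)$-st leaf of $T'$ reinserts exactly $\tau_p$ and reproduces $\tau$. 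The remaining case is $\tau=(+,-,-,\dots,-)$, which I handle by deleting the second entry: the result is still a valid sign, and growing a fork at the first leaf reinserts a $-$ after the initial $+$. This establishes reachability of every valid sign, hence the Unit property.

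For Stabilisation and Cancellation I would lift the statements from $\mathfrak F$, using that every element of $\mathcal D$ has source $|=(1,\sigma_0)$. Two such morphisms are $f=(f_0,\sigma_0)$ and $g=(g_0,\sigma_0)$ with $f_0,g_0$ trees; choosing forests $p_0,q_0$ with $p_0 f_0=q_0 g_0$ in $\mathfrak F$ (Stabilisation there), the pairs $p=(p_0,f_0(\sigma_0))$ and $q=(q_0,g_0(\sigma_0))$ are legitimate morphisms of $\vec{\mathfrak F}$ — their target signs are valid because $p_0 f_0=q_0 g_0$ is again a tree — and functoriality gives $pf=(p_0 f_0,\sigma_0)=(q_0 g_0,\sigma_0)=qg$. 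Cancellation is even more direct: if $p\circ f=q\circ f$ with $f=(f_0,\sigma_0)\in\mathcal D$, then composability forces $p=(p_0,f_0(\sigma_0))$ and $q=(q_0,f_0(\sigma_0))$ with the same source sign, the equation reads $p_0 f_0=q_0 f_0$ in $\mathfrak F$, Cancellation in $\mathfrak F$ yields $p_0=q_0$, and therefore $p=q$.

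The main obstacle is the Unit property. Stabilisation and Cancellation are almost formal consequences of the same properties of $\mathfrak F$ once one observes that the sign component of a morphism is entirely determined by its source, so that matching forests automatically give matching morphisms in $\vec{\mathfrak F}$. By contrast the Unit property requires the genuinely combinatorial fact that the fork action on signs is surjective onto all admissible sign sequences. The subtlety to watch there is that a single fork can only insert a sign opposite to its left neighbour, so the induction must be arranged — as above — always to delete a position at which the sign actually changes, with the constant tail $(+,-,-,\dots,-)$ treated as a separate case.
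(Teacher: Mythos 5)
Your proof is correct, and its overall architecture matches the paper's: Unit by exhibiting a tree realizing each $n$-sign, Stabilisation by common refinement of trees, Cancellation as an essentially formal consequence of the unoriented case. The real difference is in the Unit property. The paper disposes of it by citation --- it invokes the construction of a binary tree from an $n$-sign given in Jones's earlier paper (Section 5.2 of the reference for $\vec F$) --- whereas you prove it from scratch: you isolate the local rule that growing a fork at the $i$-th leaf inserts, immediately after position $i$, the sign opposite to $\sigma_i$, and then run an induction on $n$ that deletes a position where the sign changes (with the constant tail $(+,-,\dots,-)$ as a separate case). This makes the argument self-contained, and your case analysis is airtight: deleting a position $p\geq 3$ with $\tau_p\neq\tau_{p-1}$ preserves validity of the sign, and the reinsertion by a fork at leaf $p-1$ is exactly inverse to the deletion. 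For Stabilisation and Cancellation, the paper argues informally by ``adding carets and vertical lines'' in analogy with multiplication in $\vec F$; your formal lifting through the forgetful assignment $(f,\sigma)\mapsto f$ says the same thing but makes explicit the two points the paper leaves tacit --- that the sign component of a morphism is determined by its source, and that the target sign $p_0(f_0(\sigma_0))=(p_0f_0)(\sigma_0)$ is a valid $n$-sign because $p_0f_0$ is a tree and any tree applied to $\sigma_0$ yields a sequence beginning $+,-$. In short: the paper's proof is shorter because it outsources the combinatorial core to a reference; yours buys self-containedness and a cleaner reduction to the axioms for the unoriented forest category at the cost of a page of elementary induction.
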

 \begin{proof} 
 We start with the Unit property. A morphism in ${\it Mor}_{ \vec{\mathfrak F} }\big( |, (n,\sigma) \big)$ is just a binary tree $f$ such that $f(\sigma_0)=\sigma$. The set of these trees is non-empty since one can always determine a planar binary tree for a  given $n$-sign as done in \cite[Section 5.2, p. 31]{jo1}.

Now we take care of the Stabilisation property. We recall that one can describe the elements of the oriented Thompson group $\vec{F}$ 
 as pairs of trees (see \cite{jo1} for the notation, cf. \cite{CFP}). Given two elements $g(T_1^+,T_1^-)$ and $g(T_2^+,T_2^-)$, if one wants to perform the multiplication $g(T_1^+,T_1^-)g(T_2^+,T_2^-)$, one needs to find other representatives of the factors of the form $g(T_1^{+'},T')$ and $g(T',T_1^{-'})$. This can be done by adding pairs of opposing carets. Then the product is $g(T_1^+,T_1^-)g(T_2^+,T_2^-)=g(T_1^{+'},T_2^{-'})$. In the same way, in the present situation, we may add "carets" and vertical lines forming the forests/morphisms $p$ and $q$ to the trees/morphisms $f$ and $g$ and obtain the same tree $pf=qg$.  
 
 The Cancellation property is clear.
 \end{proof}
 
 \begin{proposition} \label{OF}
 The group of fractions of $\vec{\mathfrak F}$ is naturally isomorphic to \OF.
 \end{proposition}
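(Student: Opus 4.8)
The plan is to build the isomorphism from the obvious forgetful functor $\Phi\colon \vec{\mathfrak F}\to\mathfrak F$ which sends an object $(n,\sigma)$ to $n$ and a morphism $(f,\sigma)$ to its underlying forest $f$, and then to show that the induced homomorphism on groups of fractions is injective with image exactly $\vec F$. The whole argument rests on one structural observation that I would record first: by the very definition of $\vec{\mathfrak F}$, a morphism out of $|=(1,\sigma_0)$ to $(n,\sigma)$ is nothing but a binary tree $f$ with one root and $n$ leaves subject to $f(\sigma_0)=\sigma$; since $\sigma$ is completely determined by $f$ (it is read off the leaves as $f(\sigma_0)$), the sign datum carries no extra information once the tree is fixed. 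Hence $\Phi$ is faithful on $\mathcal D=\cup_{(n,\sigma)}{\it Mor}_{\vec{\mathfrak F}}(|,(n,\sigma))$, and two trees $f,g$ share a target object in $\vec{\mathfrak F}$ precisely when $f(\sigma_0)=g(\sigma_0)$.

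Next I would use the functoriality of $\sigma\mapsto f(\sigma)$ proved above to compare the two Ore equivalences. If $p,q$ are forests with $pf=qg$ as morphisms of $\mathfrak F$, then $(pf)(\sigma_0)=p(f(\sigma_0))=q(g(\sigma_0))=(qg)(\sigma_0)$, so the sign data automatically agree and $pf=qg$ already holds as morphisms of $\vec{\mathfrak F}$. Thus the equivalence relation defining $G(\vec{\mathfrak F})$ is simply the restriction of the one defining $F=G(\mathfrak F)$. This produces the homomorphism $\hat\Phi\colon G(\vec{\mathfrak F})\to F$ and shows immediately that it is injective: two lifted fractions that become equal in $F$ are identified by forests $p,q$ which, by the displayed computation, lift to morphisms of $\vec{\mathfrak F}$ witnessing their equality there as well.

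It then remains to identify the image of $\hat\Phi$ with $\vec F$. An element of $F$ is represented by a pair of trees $(T_+,T_-)$ with the same number $n$ of leaves, and this pair lifts to $\vec{\mathfrak F}$ exactly when $T_+$ and $T_-$ have a common target object, i.e.\ when $T_+(\sigma_0)=T_-(\sigma_0)$. So the proposition reduces to showing that this purely combinatorial matching of the induced leaf-signs coincides with the geometric condition from \cite{jo1} that the chequerboard shading surface of $(T_+,T_-)$ be orientable, normalised so that the leftmost region is coloured $+$. Here I would argue that the bicolouring rule of the earlier propositions --- opposite colours across an edge emanating clockwise from the lower incident edge at each triple point --- is precisely the chequerboard shading of the surface, so that $T_+(\sigma_0)$ records the colours of the regions along the leaf line as seen from above and $T_-(\sigma_0)$ records them as seen from below; the two colourings glue to a consistent two-colouring of the closed diagram, equivalently to an orientation of the shading surface, if and only if they agree on the leaf line.

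The main obstacle I expect is exactly this last step: translating the combinatorial identity $T_+(\sigma_0)=T_-(\sigma_0)$ into the orientability of the surface as defined in \cite{jo1}. This is a matter of carefully matching the colouring convention of the $n$-signs against the shading convention used to define $\vec F$, and of checking that the chosen normalisation (first region $+$) is respected on both sides of the leaf line. Once this dictionary is in place, the naturality of the isomorphism is automatic, since $\hat\Phi$ is induced by the canonical forgetful functor and involves no arbitrary choices.
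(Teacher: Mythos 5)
Your proposal is correct and takes essentially the same route as the paper's own proof, which rests on exactly the three points you identify: the forgetful functor to pairs of trees, injectivity because the planar tree $f$ determines $f(\sigma_0)$, and the identification of the bi-colouring rule with the chequerboard shading of \cite{jo1} so that the image is precisely $\vec F$. The paper compresses this into two sentences (asserting the colouring/shading dictionary simply ``by definition'' with a reference to \cite{jo1}), so your more careful handling of the Ore equivalences and your explicit flagging of that dictionary as the step needing verification are elaborations of the same argument rather than a different one.
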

 \begin{proof} A pair of morphisms in $\vec{\mathfrak F}$ from $|$ to any fixed object is, by definition, a pair of binary planar trees together with a bi-colouring of the
 graph described in \cite{jo1}.  Thus forgetting the $n$-signs defines a homomorphism from the group of fractions of $\vec{\mathfrak F}$ onto \OF which is injective since
 the planar tree  $f$ determines $f(\sigma_0)$.
 \end{proof}
 
 \section{The HOMFLYPT functor.}
 
 Let $\mathfrak C$ be the category whose objects are  sequences $\nu$ of an odd number of signs and 
 whose morphisms $C_\nu^\eta$ 
 are the oriented Conway tangles in  a rectangle with $m$ points on the bottom and $n$ points on
 the top so that the string incident on the $ith$ point on the bottom is ingoing or outgoing if $\nu(i)$ is $-$ or $+$ respectively.  And the string incident on the $ith$ point on the top is ingoing or outgoing if $\eta(i)$ is $+$ or $-$ respectively. Composition is stacking of tangles. Here is an element of $C_\nu^\eta$: \\
 $$
\includegraphics[scale=0.25]{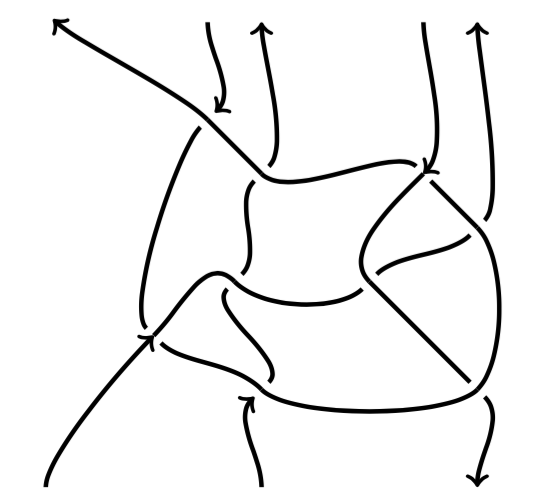}
$$
 where $\nu=(+,+,-)$ and $\eta=(+,-,+,-,+)$.
 
 Put $\delta=\frac{ s^{-2k}-s^{2k}}{s-s^{-1}}$.
 We will only use tangles such that  the leftmost bottom and top strings are ingoing and outgoing respectively which is
 why $n$-signs are required to have $\sigma(1)=+$.

 We now suppose $s=e^{\pi i/r}$, with  $r$ as in Proposition \ref{prop-inner-prod}, and take the positive definite HOMFLYPT skein of $\mathfrak C$ and let $\mathfrak H$ be the 
 category with objects being the $n$-signs as before and morphisms $H_\sigma^\tau$ being the quotient of the vector space
 spanned by $C_\sigma^\tau$ by the kernel of the positive semidefinite form defined by the HOMFLYPT polynomial.
 \begin{definition}
 We define the functor $\Phi : \vec{\mathfrak F} \rightarrow \mathfrak H$ by identifying the objects and assigning an element of ${\it Mor}_{\mathfrak H}(\sigma,\tau)$ to
 a pair $(f,\sigma)$ with $f(\sigma)=\tau$ as follows: \\
 
 If $f$ and $\sigma$ are represented as \\
 
$$
\includegraphics[scale=0.3]{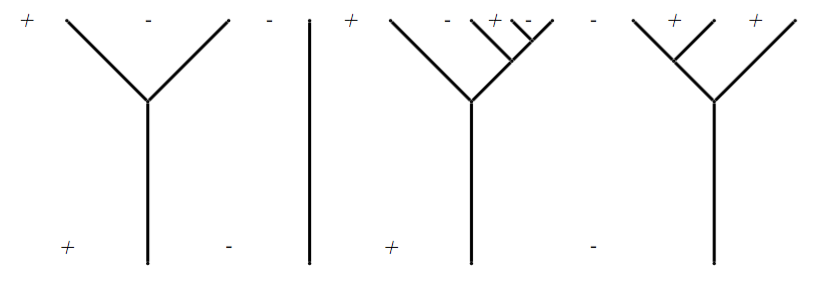}
$$
($\sigma = (+,-,+,-)$, $\tau=(+,-,-,+,-,+,-,-,+,+)$)
 then we form the oriented surface (shaded regions) as follows:\\
$$
\includegraphics[scale=0.45]{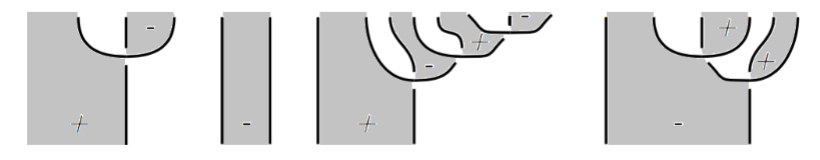}
$$
 And finally we orient the boundary components of the surface to obtain the oriented Conway tangle:\\
$$
\includegraphics[scale=0.4]{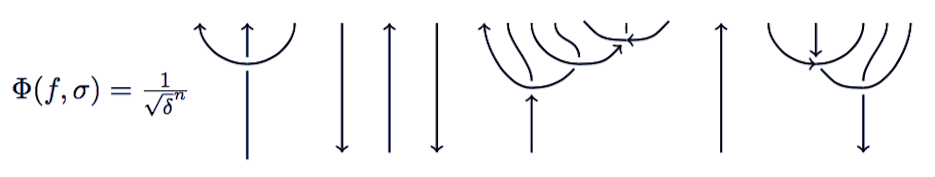}
$$
 which we interpret as a morphism $\Phi(f,\sigma)$ in $\mathfrak H$. 
 We have multiplied by a power of $\delta$ to ensure that $\Phi(f,\sigma)$ is an isometry for the Hilbert space structure.

 \end{definition}
 
 By the procedure of \cite{jo2} (see Proposition 2.1.3 therein) this gives a unitary representation of the group \OF as the group of fractions of $\vec{\mathfrak F}$. 
 
 Let $\mathcal L(g)$ denote the oriented link assigned to $g\in\;$\OF in \cite{jo1}. 
 We recognise the coefficients $\langle \pi(g)\Omega,\Omega\rangle$ of the vacuum vector $\Omega=\; \uparrow$
 in the semicontinuous limit
 as the suitably normalised HOMFLYPT polynomial of $\mathcal L(g)$ for $g\in\;$\OF.
 
 \begin{theorem} \label{HOMFLYPT}
 If $n$ is the number of leaves of $g$, the function $$g\mapsto \frac{1}{\delta^n}{\rm HOMFLYPT}(\mathcal L(g))$$
 is a positive definite function on \OF.
 \end{theorem}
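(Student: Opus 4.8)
The plan is to realise the function $g \mapsto \delta^{-n}\,\mathrm{HOMFLYPT}(\mathcal L(g))$ as a diagonal matrix coefficient $\langle \pi(g)\Omega,\Omega\rangle$ of a \emph{unitary} representation $\pi$ of $\vec F$, because any such coefficient is automatically positive definite. All the ingredients have been assembled above; what remains is to run the machine of \cite{jo2} and to check that the coefficient it produces is exactly the normalised HOMFLYPT polynomial.

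First I would pass from the positive semidefinite forms to genuine inner products: by Proposition \ref{prop-inner-prod}, at $s = e^{\pi i / r}$ with $r \geq k+2$ each HOMFLYPT form on $V_\sigma$ is positive semidefinite, so its quotient by the radical is a Hilbert space $H_\sigma$, and these are precisely the morphism spaces $H_\sigma^\tau$ of $\mathfrak H$. I would then verify that the functor $\Phi : \vec{\mathfrak F}\to\mathfrak H$ is well defined — the assignment of oriented crossings to the forks is compatible with the skein relations and descends through the radical — and that the chosen power of $\delta$ makes each $\Phi(f,\sigma)$ an isometry, as recorded in the definition. Since $\vec{\mathfrak F}$ satisfies the Unit, Stabilisation and Cancellation axioms and has $\vec F$ as its group of fractions (Proposition \ref{OF}), Proposition 2.1.3 of \cite{jo2} applies verbatim to the isometry-valued functor $\Phi$ and yields a unitary representation $\pi$ of $\vec F$ on the direct limit of the $H_\sigma$, together with the vacuum vector $\Omega = \uparrow$ attached to $| = (1,\sigma_0)$; unitarity of $\pi$ is exactly the isometry property of $\Phi$.

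The substantive step is the identification $\langle \pi(g)\Omega,\Omega\rangle = \delta^{-n}\,\mathrm{HOMFLYPT}(\mathcal L(g))$. Representing $g$ by a pair of trees $(T_+,T_-)$ out of $|$ with common target $\tau$, the coefficient unwinds, in the semicontinuous limit, to the pairing $\langle \Phi(T_+,\sigma_0),\Phi(T_-,\sigma_0)\rangle_{H_\tau}$; by the definition of the form on $\mathfrak H$ this closes the two oriented Conway tangles into a single diagram, and I would check that this diagram is precisely the oriented boundary link $\mathcal L(g)$ of \cite{jo1} and that the accumulated normalisation is the announced factor $\delta^{-n}$, $n$ being the number of leaves. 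I expect this matching — tracking the direct/semicontinuous limit, verifying that the closure reproduces the link of \cite{jo1}, and pinning down the $\delta$-power exactly (which also shows the expression is independent of the chosen representative, hence a genuine function of $g$) — to be the main obstacle. Granting it, positive definiteness is the routine identity $\sum_{i,j}\overline{c_i}c_j\,\langle\pi(g_i^{-1}g_j)\Omega,\Omega\rangle = \bigl\| \sum_j c_j\,\pi(g_j)\Omega \bigr\|^2 \geq 0$ for all finite families $g_i \in \vec F$ and $c_i \in \mathbb C$.
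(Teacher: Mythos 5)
Your proposal is correct and follows essentially the same route as the paper: the paper's proof of Theorem \ref{HOMFLYPT} is exactly the construction of the isometry-valued functor $\Phi:\vec{\mathfrak F}\to\mathfrak H$ (resting on Proposition \ref{prop-inner-prod}), the application of Proposition 2.1.3 of \cite{jo2} to obtain a unitary representation of the group of fractions \OF (Proposition \ref{OF}), and the recognition of $\langle\pi(g)\Omega,\Omega\rangle$ in the semicontinuous limit as $\delta^{-n}\,\mathrm{HOMFLYPT}(\mathcal L(g))$, whence positive definiteness is automatic. The steps you flag as the ``main obstacle'' (matching the closure with $\mathcal L(g)$ and pinning down the power of $\delta$) are precisely what the paper handles by its choice of normalisation in the definition of $\Phi$.
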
 
 
 \begin{remark} If one changes the $*$-structure as in  Remark \ref{real}, the HOMFLYPT polynomial of the \emph{alternating} link
 obtained by the inner product becomes a positive definite function on \OF for $s\in \mathbb R^+$ (actually ${\mathbb R}^*$).
 \end{remark}
 
 \begin{remark} The result also follows for \emph{any} unitary TQFT invariant such as the Kauffman polynomial
 at various values and all cablings of such. And of course for the original Jones polynomial, improving the corresponding partial results obtained in \cite{AC2} by a different method. 
 We gave the details for HOMFLYPT by way of illustration.
 \end{remark}
 
 \smallskip
\noindent{\bf Acknowledgements.} The authors would like to thank the Isaac Newton Institute for Mathematical Sciences, Cambridge, for support and hospitality during the programme \emph{Operator algebras: subfactors and their applications} where work on this paper was undertaken. This work was supported by EPSRC grant no EP/K032208/1.


\begin{thebibliography}{99}
\bibitem{AC2} V. Aiello, R. Conti.  The Jones polynomial and functions of positive type on the oriented Jones-Thompson groups $\vec{F}$ and $\vec{T}$. arXiv:1603.03946. 

\bibitem{CFP} J. W. Cannon, W. J. Floyd, W. R. Parry, Introductory notes on Richard Thompson's groups. \emph{Enseignement Math\'ematique} {\bf 42}, 215-256 (1996).

\bibitem{F} P. Freyd, D. Yetter, J. Hoste, W. B. R. Lickorish, K. Millet, A. Ocneanu, A new polynomial invariant of knots and links, \emph{Bulletin of the American Mathematical Society}  {\bf 12}, 239-246 (1985). 

\bibitem{jo1}V.F.R. Jones.
Some unitary representations of Thompson's groups F and T.
arXiv:1412.7740. 

\bibitem{jo2}V.F.R. Jones,
A no-go theorem for the continuum  limit of a quantum spin chain.
 arXiv:1607.08769.

\bibitem{jo3}V.F.R. Jones, Planar Algebras I, 
	math/9909027.

\bibitem{MoTr} H. R. Morton, P. Traczyk. Knots and algebras. 
In E. Martin-Peinador and A. Rodez Usan, editors, \emph{Contribuciones Matem\'aticas en homenaje al profesor D. Antonio Plans Sanz de Bremond.}, p. 201-219. 
Zaragoza: Secretariado de Publicaciones, Universidad de Zaragoza, 1990.

\bibitem{wn}
 H. Wenzl,
Hecke algebras of type $A_n$ and subfactors.
{\em Inventiones Mathematicae} {\bf 92}, 345--383  (1988).


\bibitem{wn2}
 H. Wenzl,
Braids and invariants of $3$-manifolds,
{\em Inventiones Mathematicae} {\bf 114}, 235--275  (1993).

\end{thebibliography}
\end{document}